\title[Fast loops on semi-weighted homogeneous hypersurface singularities]
{Fast loops on semi-weighted homogeneous hypersurface singularities}
\author{Alexandre Fernandes}\thanks{This paper was written during my posdoctoral stage at Universidade de S\~ao Paulo and this
research was partially supported by CNPq grant N 150578/2009-1.}
\address{Departamento de Matem\'atica, Universidade Federal
do Cear\'a, Av. Mister Hull s/n,Campus do PICI, Bloco 914,CEP:
60.455-760 - Fortaleza - CE - Brasil.}
\email{alexandre.fernandes@ufc.br}
\date{\today}
\keywords{bi-Lipschitz, Complex Singularity}
\subjclass{14B05,14J17,32S25}
\def \N {\mathbb{N}}
\def \Q {\mathbb{Q}}
\def \R {\mathbb{R}}
\def \C {\mathbb{C}}
\newtheorem{theorem}{Theorem}[section]
\newtheorem{lemma}[theorem]{Lemma}
\newtheorem{proposition}[theorem]{Proposition}
\newtheorem{corollary}[theorem]{Corollary}
\theoremstyle{definition}
\newtheorem{example}[theorem]{Example}
\theoremstyle{remark}
\numberwithin{equation}{section}
\begin{document}

\begin{abstract}
We show the existence of ($1+\frac{w_2}{w_3}$)-fast loops on semi-weighted homogeneous hypersurface singularities with weights $w_1\geq w_2>w_3$.
In particular we show that semi-weighted homogeneous hypersurface singularities have metrical conical structure only if
its two low weights are equal.
\end{abstract}

\maketitle

\section{Introduction}

Let $X\subset\R^n$ be a subanalytic set with a singularity at $x$. It is well-known for small real numbers
$\epsilon>0$ that there exists a homeomorphism from the Euclidean ball $B(x,\epsilon)$ to itself which maps
$X\cap B(x,\epsilon)$ onto the straight cone over $X\cap S(x,\epsilon)$ with vertex at $x$. The homeomorphism $h$ is called a \emph{topological
conical structure} of $X$ at $x$ and, since John Milnor proved
the existence of topological conical structure for algebraic complex hypersurfaces with an isolated singularity \cite{M},
some authors say $\epsilon$ is a \emph{Milnor radius} of $X$ at $x$. Some developments of a Lipschitz geometry of complex
algebraic singularities come from the following question: given an algebraic subset $X\subset\C^n$ with an isolated singularity at
$x$, is there $\epsilon>0$ such that $X\cap B(x,\epsilon)$ is bi-Lipschitz homeomorphic to the cone over $X\cap S(x,\epsilon)$ with vertex at $x$? When we have a positive answer for this question we say that \emph{$(X,x)$ admits a  metrical conical structure}. Some motivations
for this question were given in \cite{BF}, \cite{BFN} and, in the same papers, the above question was answered negatively. The strategy used in \cite{BFN} to show that some examples of complex algebraic surface singularities do not admit a metrical conical structure was estimating a homotopic version of the first characteristic exponent for weighted homogeneous surface singularities (see \cite{BB} or \cite{BC} for a definition of characteristic exponent). In this paper we compute this exponent for some semi-weighted homogeneous surface singularities.

\section{Preliminaries}

\subsection{Inner metric}\label{section_inner_metric}
Given an arc $\gamma\colon[0,1]\rightarrow\R^n$, we remember that the {\it length of} $\gamma$ is defined by
$$l(\gamma)=\inf\{\sum_{i=1}^{m} |\gamma(t_{i})-\gamma(t_{i-1})| \ : \ 0=t_0<t_1<\cdots<t_{m-1}<t_m=1 \}.$$

Let $X\subset\R^n$ be a subanalytic connected subset. It is well-know that the function
$$d_X\colon X\times X\rightarrow [0,+\infty)$$
defined by
$$d_X(x,y)=\inf\{l(\gamma) \ : \ \gamma\colon [0,1]\rightarrow X; \ \gamma(0)=x , \ \gamma(1)=y\}$$ is a metric on $X$, so-called {\it inner metric} on $X$.

\begin{theorem}[Pancake Decomposition \cite{K}] Let $X\subset\R^n$ be a subanalytic connected subset. Then, there exist $\lambda>0$
 and $X_1,\dots,X_m$ subanalytic subsets such that:
\begin{enumerate}
\item[a.] $X=\bigcup_{i=1}^{m}X_i$,
\item[b.] $d_X(x,y)\leq\lambda |x-y|$ for any $x,y\in X_i$, $i=1,\dots,m$.
\end{enumerate}
\end{theorem}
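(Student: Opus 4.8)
The plan is to reduce the statement to a structural result of subanalytic geometry — the existence of a finite decomposition of $X$ into ``$L$-regular'' cells, on which all the subanalytic functions entering the construction have uniformly bounded gradient — and then to verify, by induction on dimension, that each such cell is \emph{normally embedded}, in the sense that its inner metric is comparable to the ambient Euclidean one. Since $d_X$ is defined by minimizing the length of arcs contained in $X$, and every piece $X_i$ produced below will satisfy $X_i\subset X$, it suffices to exhibit a finite subanalytic decomposition $X=\bigcup_{i=1}^m X_i$ together with a constant $\lambda>0$ such that any two points of a single $X_i$ are joined by an arc \emph{inside $X_i$} of length at most $\lambda$ times their Euclidean distance; then $d_X(x,y)\le d_{X_i}(x,y)\le\lambda|x-y|$ for $x,y\in X_i$.

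First I would reduce to the case in which $X$ is bounded — this is the case needed in the present paper, where $X$ is a germ of a hypersurface singularity. Next I would invoke the refined subanalytic cell decomposition: $X$ is a finite union of subanalytic $C^1$ cells $C_1,\dots,C_m$, each of which, after a suitable permutation of the coordinates, is built inductively either as the graph $\{x_k=\varphi(x')\}$ of a $C^1$ subanalytic function over a lower-dimensional cell of the same kind, or as a band $\{\varphi(x')<x_k<\psi(x')\}$ between two such graphs, with one constant $M$ bounding $|\nabla\varphi|$ and $|\nabla\psi|$ for every function occurring in the construction. The core step is then the Lemma: such a cell $C\subset\R^n$ is normally embedded with a constant $\lambda=\lambda(M,n)$. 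I would prove it by induction on $n$. For a graph cell, the coordinate projection onto the base cell is a bi-Lipschitz homeomorphism (with constants $1$ and $\sqrt{1+M^2}$), so any length estimate in $C$ is inherited from the inductive hypothesis applied to the base. For a band cell, given $x,y\in C$ I would choose, by the inductive hypothesis, a short arc in the base cell joining the projections $x'$ and $y'$ and lift it into $C$, moving the last coordinate along the lift in a controlled fashion; the bounds $|\nabla\varphi|,|\nabla\psi|\le M$ ensure simultaneously that the lifted arc remains inside the band and that its length is at most a constant depending on $M$ times the length of the base arc, hence at most $\lambda|x-y|$. Setting $X_i:=C_i$ and letting $\lambda$ be the largest of these finitely many constants completes the proof.

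The main obstacle is the $L$-regular decomposition itself: beyond the ordinary subanalytic cell decomposition one must perform a further subdivision that tames the gradients of the defining functions, and this is the substantial input. It is obtained by induction on dimension with the help of {\L}ojasiewicz-type inequalities, the key point being that the locus where some gradient fails to be bounded is of strictly smaller dimension and can be absorbed into lower-dimensional cells; this is precisely what is established in \cite{K}. Once that is granted, the only remaining delicate point is the path lifting in a band cell — producing a lift of a base arc that neither leaves the band nor is too long — which is elementary but must be carried out with care.
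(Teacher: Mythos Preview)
The paper does not prove this statement at all: it is quoted as a known result from Kurdyka \cite{K}, with no argument supplied. So there is no ``paper's own proof'' to compare against; the theorem functions here purely as a black box, used once in the proof of Proposition~\ref{invariance} to pass from inner-metric Lipschitz bounds to Euclidean ones.

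That said, your outline is a faithful sketch of the standard route to the Pancake Decomposition, and in particular of what Kurdyka does in the reference cited. The reduction to a finite family of $L$-regular cells with uniformly bounded gradients, followed by an inductive verification that each such cell is normally embedded (graph cells via the obvious bi-Lipschitz projection, band cells via a controlled lift of a short base arc), is exactly the mechanism behind the result. You have also correctly identified where the real work lies: the existence of the $L$-regular decomposition itself, which requires a further subdivision beyond the usual cell decomposition to tame the gradients, and which is the content of \cite{K}. The path-lifting step in a band is indeed elementary once the gradient bound is in hand. One small remark: the reduction to bounded $X$ is not strictly necessary for the statement as quoted (Kurdyka's result is local/global in the appropriate sense), but since the paper only ever applies the theorem to germs, your restriction is harmless here.
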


\subsection{Horn exponents}\label{section_horn_exponents}

Let $\beta \geq 1$ be a rational number. The germ of
$$H_{\beta}=\{ (x,y,z)\in \R^3 \ : \ x^{2}+y^{2}=z^{\beta}, \ z\geq 0 \}$$ at origin $\in\R^3$
is called a {\it $\beta$-horn}.

By results of \cite{B}, we conclude that a ${\beta_1}$-horn is bi-Lipschitz equivalent, with respect to the inner
metric, to a ${\beta_2}$-horn if, and only if $\beta_1=\beta_2$. Let $\Omega\subset \R^n$
be a 2-dimensional subanalytic set. Let $x_0\in \Omega$ be a point
such that $\Omega$ is a topological 2-dimensional manifold without
boundary near $x_0$.

\begin{theorem} \cite{B} \label{horn-theorem}
There exists a unique rational number $\beta\geq 1$ such that the
germ of $\Omega$ at $x_0$ is bi-Lipschitz equivalent, with respect
to the inner metric, to a $\beta$-horn.
\end{theorem}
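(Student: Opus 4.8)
The statement has two halves. \emph{Uniqueness} is immediate from the sentence preceding it: were the germ of $\Omega$ at $x_0$ bi-Lipschitz equivalent, for the inner metric, to a $\beta_1$-horn and to a $\beta_2$-horn, the two horns would be bi-Lipschitz equivalent to each other, and then $\beta_1=\beta_2$ by the criterion of \cite{B}. So the content is \emph{existence}: one must produce a rational $\beta\ge 1$ and a germ of homeomorphism $h\colon(\Omega,x_0)\to(H_\beta,0)$ that is bi-Lipschitz for the inner metrics. The plan is to show that the local inner metric of $\Omega$ at $x_0$ is that of a cone over the link circle carrying a single power-law radial profile, and then to recognise such a metric germ as a horn.

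First I shrink $\Omega$ to a neighbourhood of $x_0$ which is a topological open disc --- possible since $\Omega$ is a topological $2$-manifold without boundary near $x_0$ --- and apply the local conical structure theorem for subanalytic sets: there is $\epsilon_0>0$ such that, for $0<\epsilon\le\epsilon_0$, the link $L_\epsilon:=\Omega\cap S(x_0,\epsilon)$ is a subanalytic topological circle, $x\mapsto|x-x_0|$ has no critical value in $(0,\epsilon_0]$ on $\Omega\setminus\{x_0\}$, and the family $\{L_\epsilon\}$ is subanalytically trivial over $(0,\epsilon_0]$ by homeomorphisms preserving the radius function; in particular $\Omega\cap\overline{B(x_0,\epsilon_0)}$ is a topological cone over $L_{\epsilon_0}$.

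Next I extract the exponent. Applying the Pancake Decomposition of \cite{K} to $X:=\Omega\cap\overline{B(x_0,\epsilon_0)}$ gives $\lambda\ge 1$ and subanalytic $X_1,\dots,X_m$ with $X=\bigcup_iX_i$ and $d_X(x,y)\le\lambda|x-y|$ whenever $x,y$ lie in a common $X_i$; discarding pieces whose closure avoids $x_0$ and shrinking $\epsilon_0$ I may assume $x_0\in\overline{X_i}$ for all $i$. As usual this makes $d_X$ bi-Lipschitz equivalent to the metric obtained by gluing the $X_i$, each with its Euclidean metric, along their intersections. For small $\epsilon$ each $X_i\cap S(x_0,\epsilon)$ consists of a constant number of subanalytic arcs, so $L_\epsilon$ is partitioned into finitely many subanalytic arcs of fixed cyclic order; on a pancake the inner length of such an arc is $\lambda$-comparable to its Euclidean length, hence a subanalytic function of $\epsilon$. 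By the monotonicity theorem for subanalytic functions of one variable and their Puiseux expansion, these lengths, as well as $D(\epsilon):=\operatorname{diam}_{d_X}L_\epsilon$ and $\operatorname{dist}_{d_X}(x_0,L_\epsilon)$, have the form $c\,\epsilon^{q}(1+o(1))$ with $c>0$, $q\in\Q$. A path in $X$ from $x_0$ to $L_\epsilon$ has length $\ge\epsilon$ while crossing a bounded number of pancakes, so $\operatorname{dist}_{d_X}(x_0,L_\epsilon)\asymp\epsilon$; comparing $D(\epsilon)$ with this and normalising gives the rational $\beta\ge 1$ that will be the horn exponent.

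Finally I construct $h$. From the subanalytic trivialisations of the families $X_i\cap S(x_0,\epsilon)$ I glue pancake-wise parametrisations into a homeomorphism of a neighbourhood of $x_0$ in $\Omega$ onto a neighbourhood of $0$ in $H_\beta$ that maps every $L_\epsilon$ onto the link of $H_\beta$ at the height where the two links have comparable inner diameter, and is monotone radially. Two-sided Lipschitz bounds for $h$ then reduce, via the inner$\,\sim\,$outer comparison on each pancake and the exactness of the Puiseux asymptotics, to bounding the derivative of a one-variable radial reparametrisation and of its inverse. \textbf{This uniform estimate is the main obstacle.} One must rule out oscillation of the radial profile --- where the subanalytic monotonicity and \L ojasiewicz inequalities are indispensable --- and, more delicately, control how the finitely many pancakes fit together along their common walls, i.e.\ that the inner metric of $\Omega$ near a wall at distance $\epsilon$ from $x_0$ is comparable, uniformly in $\epsilon$, to the corresponding piece of the horn metric; otherwise the Lipschitz constant of $h$ degenerates as $\epsilon\to 0$. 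This last point rests again on the curve selection lemma and \L ojasiewicz inequalities applied to the walls, and is the technical heart of the argument; granting it, the estimates close, and with the uniqueness noted at the outset the theorem follows.
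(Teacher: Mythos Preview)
The paper does not actually prove Theorem~\ref{horn-theorem}: it is quoted as a result of Birbrair~\cite{B}, with no argument supplied. So there is no ``paper's own proof'' against which to compare your attempt; the theorem functions here purely as a black box feeding into the later computation of horn exponents via volume growth and orders of contact.

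As for your sketch itself, it has an explicit and acknowledged gap. You correctly isolate uniqueness as immediate, and you outline a plausible strategy for existence (conical structure, pancake decomposition, Puiseux asymptotics for link diameters, then a radially monotone map onto $H_\beta$). But the decisive step --- the uniform two-sided Lipschitz bound for the glued map $h$ as $\epsilon\to 0$, and in particular the control of the inner metric across pancake walls --- is not carried out; you write ``granting it, the estimates close.'' That is precisely the content of the theorem in~\cite{B}: the classification there proceeds by building a normal embedding and a canonical H\"older triangle decomposition (``pizza decomposition'') of the surface germ, from which the single exponent $\beta$ and the bi-Lipschitz map to $H_\beta$ are extracted simultaneously. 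Without either reproducing that machinery or giving an independent argument for the wall estimates, your proposal remains an outline rather than a proof. If you want to make it self-contained, the missing ingredient is a lemma to the effect that for a subanalytic $2$-manifold germ the inner distance between two points on $L_\epsilon$ is uniformly comparable (in $\epsilon$) to $\epsilon^\beta$ times their angular separation on the link circle; this is what the H\"older-triangle analysis in~\cite{B} supplies.
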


The number $\beta$ is called {\it the horn exponent of $\Omega$ at
$x_0$}. We use the notation $\beta(\Omega,x_0)$. By Theorem
\ref{horn-theorem}, $\beta(\Omega,x_0)$ is a complete intrinsic
bi-Lipschitz invariant of germ of subanalytic sets which are
topological 2-dimensional manifold without boundary. In the following, we
show a way to compute horn exponents.

\bigskip

According to \cite{BB},
$\beta(\Omega,x_0)+1$ is the volume growth number of $\Omega$ at $x_0$, i. e.
$$\beta(\Omega,x_0)+1=\lim_{r\to 0+}\frac{\log \mathcal{H}^2[\Omega\cap B(x_0,r)]}{\log r}$$
where $\mathcal{H}^2$ denotes the $2$-dimensional Hausdorff measure with respect to Euclidean metric on $\R^n$.

\subsection{Order of contact of arcs}

Let $\gamma_1\colon [0,\epsilon)\rightarrow \Omega$ and
$\gamma_2\colon [0,\epsilon)\rightarrow \Omega$ be two continuous semianalytic
arcs with $\gamma_1(0)=\gamma_2(0)=x_0$ and not identically equal to
$x_0$. We suppose that the arcs are parameterized in the following
way:
$$\|\gamma_i(t)-x_0\|=t, \ i=1,2.$$
Let $\rho(t)$ be a function defined as follows:
$\rho(t)=\|\gamma_1(t)-\gamma_2(t)\|$. Since $\rho$ is a subanalytic
function there exist numbers $\lambda \in \Q$ and $a\in \R$, $a\neq
0$, such that
$$\rho(t)=at^{\lambda}+o(t^{\lambda}).$$
The number $\lambda$ is called {\it an order of contact of
$\gamma_1$ and $\gamma_2$}. We use the notation
$\lambda(\gamma_1,\gamma_2)$ (see \cite{BF1}).

Let $K$ be the field of germs of subanalytic functions
$f\colon(0,\epsilon)\rightarrow \R$. Let $\nu\colon K\rightarrow\R$
be a canonical valuation on $K$. Namely, if $f(t)=\alpha
t^{\beta}+o(t^{\beta})$ with $\alpha\neq 0$ we put $\nu(f)=\beta$.

\begin{lemma}\label{comparison2} Let $\gamma_1,\gamma_2$ be a pair of
semianalytic arcs such that $\gamma_1(0)=\gamma_2(0)=x_0$ and
$\gamma_i\neq x_0$ ($i=1,2$). Let $\tilde{\gamma_1}(\tau)$ and
$\tilde{\gamma_2}(\tau)$ be semianalytic parameterizations of
$\gamma_1$ and $\gamma_2$ such that
$\|\tilde{\gamma_i}(\tau)-x_0\|=\tau + o_i(\tau)$, $i=1,2$. Let
$l(\tau)=\|\tilde{\gamma_1}(\tau)-\tilde{\gamma_2}(\tau)\|$. Then
$\nu(l(\tau))\leq \lambda(\gamma_1,\gamma_2)$.
\end{lemma}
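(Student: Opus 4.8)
The plan is to exploit that $\tilde\gamma_1,\tilde\gamma_2$ parametrize the very same two semianalytic arcs as $\gamma_1,\gamma_2$, only differently, and to expand $l(\tau)$ in the canonical parameter $t=\|\cdot-x_0\|$, for which the order of contact $\lambda$ is defined. First I would fix notation: since $\gamma_i$ satisfies $\|\gamma_i(t)-x_0\|=t$ it is injective, so there is a semianalytic germ $\phi_i$ of a homeomorphism of $[0,\epsilon)$ fixing $0$ with $\tilde\gamma_i(\tau)=\gamma_i(\phi_i(\tau))$ and $\phi_i(\tau)=\|\tilde\gamma_i(\tau)-x_0\|=\tau+o_i(\tau)$. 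Put $\delta(t)=\gamma_1(t)-\gamma_2(t)$ and $h(\tau)=\phi_2(\tau)-\phi_1(\tau)$, so $h(\tau)=o(\tau)$, and decompose
$$\tilde\gamma_1(\tau)-\tilde\gamma_2(\tau)=\delta(\phi_1(\tau))+\bigl(\gamma_2(\phi_1(\tau))-\gamma_2(\phi_1(\tau)+h(\tau))\bigr).$$

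Next I would expand each summand in powers of $\tau$. Since $\|\delta(t)\|=\rho(t)=at^{\lambda}+o(t^{\lambda})$ with $a\neq0$, and $\delta(t)/\rho(t)$ is a bounded semianalytic vector-valued function, it converges to a unit vector $e$; using $\phi_1(\tau)^{\lambda}=\tau^{\lambda}+o(\tau^{\lambda})$ this gives $\delta(\phi_1(\tau))=a\tau^{\lambda}e+o(\tau^{\lambda})$. For the second summand, near $0$ the arc $\gamma_2$ is $C^1$ with $\gamma_2'(s)\to w_2$, the unit tangent vector of $\gamma_2$ at $x_0$; integrating $\gamma_2'$ over $[\phi_1(\tau),\phi_1(\tau)+h(\tau)]$ yields $\gamma_2(\phi_1(\tau))-\gamma_2(\phi_1(\tau)+h(\tau))=-h(\tau)w_2+o(|h(\tau)|)$. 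If $h\equiv0$ then $l(\tau)=\rho(\phi_1(\tau))$ and $\nu(l)=\lambda$; otherwise $h(\tau)=c\tau^{k}+o(\tau^{k})$ with $c\neq0$ and $k>1$ (because $h(\tau)=o(\tau)$), so the second summand is $-c\tau^{k}w_2+o(\tau^{k})$. Altogether
$$\tilde\gamma_1(\tau)-\tilde\gamma_2(\tau)=a\tau^{\lambda}e-c\tau^{k}w_2+o(\tau^{\min(\lambda,k)}).$$

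Comparing $\lambda$ with $k$ should then finish the argument. If $k>\lambda$ the leading term is $a\tau^{\lambda}e$, so $l(\tau)=a\tau^{\lambda}+o(\tau^{\lambda})$ and $\nu(l)=\lambda$; if $k<\lambda$ the leading term is $-c\tau^{k}w_2$, so $\nu(l)=k<\lambda$; in both cases $\nu(l)\le\lambda$. The delicate point, and the one I expect to be the main obstacle, is the borderline case $k=\lambda$, where the leading coefficient is $ae-cw_2$ and one only gets $\nu(l)=\lambda$ provided $ae\neq cw_2$.

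To rule out $ae=cw_2$ I would bring in the geometry. In this case $k>1$ forces $\lambda>1$; writing $u_i(t)=(\gamma_i(t)-x_0)/t$ (a unit vector with $u_i(t)\to w_i$) we have $\rho(t)=t\,\|u_1(t)-u_2(t)\|$, so $\lambda>1$ forces $w_1=w_2$, which equals $w_2$. From $\|u_1(t)\|\equiv\|u_2(t)\|\equiv1$ we get $\langle u_1(t)+u_2(t),\,u_1(t)-u_2(t)\rangle=0$, and since $\tfrac12(u_1(t)+u_2(t))\to w_2$ while $e=\lim(u_1(t)-u_2(t))/\|u_1(t)-u_2(t)\|$, a short estimate gives $\langle e,w_2\rangle=0$. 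Then $ae=cw_2$ would imply $c=\langle ae,w_2\rangle=0$, contradicting $c\neq0$. Hence the borderline case also yields $\nu(l)=\lambda$, and $\nu(l(\tau))\le\lambda(\gamma_1,\gamma_2)$ in every case.
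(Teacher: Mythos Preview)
The paper does not give its own proof of this lemma: right after stating it and the subsequent theorem, the author records that both ``were proved in \cite{BF3}''. So there is nothing in the paper to compare your argument against directly.

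That said, your proof is correct and self-contained. The decomposition
\[
\tilde\gamma_1(\tau)-\tilde\gamma_2(\tau)=\delta(\phi_1(\tau))+\bigl(\gamma_2(\phi_1(\tau))-\gamma_2(\phi_2(\tau))\bigr)
\]
cleanly separates the ``genuine'' contact term from the reparametrization error, and the Puiseux-type case analysis on $k$ versus $\lambda$ is sound. The delicate point is indeed the borderline $k=\lambda$, and your resolution is nice: from $\|u_1(t)\|\equiv\|u_2(t)\|\equiv 1$ one gets $\langle u_1+u_2,\,u_1-u_2\rangle\equiv 0$, and since $\lambda>1$ forces $w_1=w_2$, passing to the limit yields $\langle e,w_2\rangle=0$; hence $ae=cw_2$ would give $c=\langle cw_2,w_2\rangle=\langle ae,w_2\rangle=0$, a contradiction. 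Two small remarks worth making explicit in a final write-up: (i) the existence of the limit $e=\lim\delta(t)/\rho(t)$ follows from the componentwise Puiseux expansion of $\delta$, and (ii) the $C^1$-regularity of $\gamma_2$ that you invoke holds on $(0,\epsilon)$ with $\gamma_2'(s)\to w_2$ because the distance-parametrized arc has leading term $s\,w_2$ in its Puiseux expansion, even though $\gamma_2$ need not be $C^1$ at $s=0$ itself.
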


The following result is an alternative way to compute horn exponents of germ of subanalytic sets which are
topological 2-dimensional manifold without boundary.

\begin{theorem}\label{calculation_of_beta(horn)}
Let $\Omega\subset\R^n$ be a 2-dimensional subanalytic set. Let
$x_0\in \Omega$ be a point such that $\Omega$ is a topological
2-dimensional manifold without boundary near $x_0$. Then
$\beta(\Omega,x_0)=\inf \{\lambda(\gamma_1,\gamma_2) \ : \
\gamma_1,\gamma_2 \mbox{ are semianalytic arcs on $\Omega$ with }
\gamma_1(0)=\gamma_2(0)=x_0\}$.
\end{theorem}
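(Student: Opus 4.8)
The plan is to compare both sides with the $\beta$-horn, $\beta=\beta(\Omega,x_0)$. For a subanalytic set $Y$ and a point $p\in Y$, write $\mu(Y,p)$ for the infimum of $\lambda(\gamma_1,\gamma_2)$ over semianalytic arcs $\gamma_1,\gamma_2$ on $Y$ with $\gamma_1(0)=\gamma_2(0)=p$, so that the assertion is $\beta(\Omega,x_0)=\mu(\Omega,x_0)$. By Theorem~\ref{horn-theorem} there is a germ of a homeomorphism $\phi\colon(\Omega,x_0)\to(H_\beta,0)$, bi-Lipschitz for the inner metrics, with $\beta=\beta(\Omega,x_0)$; and $\beta(H_\beta,0)=\beta$. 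Thus it is enough to prove (i) $\mu(\Omega,x_0)=\mu(H_\beta,0)$, and (ii) $\mu(H_\beta,0)=\beta$.

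For (i) the point is that the order of contact of two semianalytic arcs is detected by the inner metric: if $\alpha_1,\alpha_2$ are semianalytic arcs on a subanalytic $Y$ with $\alpha_1(0)=\alpha_2(0)=p$, parameterized by $\|\alpha_i(t)-p\|=t$, then $\lambda(\alpha_1,\alpha_2)=\nu\bigl(t\mapsto d_Y(\alpha_1(t),\alpha_2(t))\bigr)$. The inequality $\nu\bigl(t\mapsto d_Y(\alpha_1(t),\alpha_2(t))\bigr)\le\lambda(\alpha_1,\alpha_2)$ is immediate from $\|\alpha_1(t)-\alpha_2(t)\|\le d_Y(\alpha_1(t),\alpha_2(t))$. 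For the opposite inequality one must join $\alpha_1(t)$ to $\alpha_2(t)$ inside $Y$ by a path of length $O(\|\alpha_1(t)-\alpha_2(t)\|)$: here I would use the Pancake Decomposition to cut $Y$ into pieces on which the inner and Euclidean metrics are bi-Lipschitz equivalent, and the hypothesis that $Y$ is a topological $2$-manifold near $p$ to control how consecutive pieces are glued along the arcs; Lemma~\ref{comparison2} is used to pass between the Euclidean arc-length parameter and a comparable one. Granting this, (i) follows: a germ of an inner bi-Lipschitz homeomorphism sends semianalytic arcs to semianalytic arcs and multiplies $d_\Omega(\gamma_1(t),\gamma_2(t))$ by a factor bounded above and below, which the valuation $\nu$ does not see, so $\lambda(\phi\circ\gamma_1,\phi\circ\gamma_2)=\lambda(\gamma_1,\gamma_2)$; taking infima gives $\mu(\Omega,x_0)=\mu(H_\beta,0)$.

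For (ii) I would compute directly on $H_\beta$, using that the slice of $H_\beta$ by the Euclidean sphere of radius $t$ about the origin is a circle centered on the $z$-axis whose radius $r(z(t))$ (and hence length) has valuation $\beta$ in $t$; this is exactly the identity $\beta(H_\beta,0)+1=\nu\bigl(\int_0^r L(t)\,dt\bigr)+1$ for the slice-length function $L$. Since the defining function $z\mapsto r(z)^2+z^2$ is strictly increasing, the height $z(t)$ at Euclidean distance $t$ is the same for every arc. For $\mu(H_\beta,0)\le\beta$, take the two arcs $z\mapsto(\pm r(z),0,z)$ running to the vertex on opposite sides of the $z$-axis: at Euclidean distance $t$ they sit at $(\pm r(z(t)),0,z(t))$, at distance $2r(z(t))$, so $\lambda$ of this pair is $\nu\bigl(2r(z(t))\bigr)=\beta$. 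For $\mu(H_\beta,0)\ge\beta$, let $\gamma_1,\gamma_2$ be arbitrary semianalytic arcs on $H_\beta$ through the vertex; by the curve selection lemma each of them, reparameterized by the height $z$, is of the form $z\mapsto\bigl(r(z)\cos\theta_i(z),r(z)\sin\theta_i(z),z\bigr)$ with $\theta_i$ semianalytic. Reparameterized by Euclidean distance $t$ they are at the common height $z(t)$, so
$$\|\gamma_1(t)-\gamma_2(t)\|^2=2\,r(z(t))^2\bigl(1-\cos(\theta_1(z(t))-\theta_2(z(t)))\bigr)\le 4\,r(z(t))^2,$$
whence $\lambda(\gamma_1,\gamma_2)=\nu\bigl(t\mapsto\|\gamma_1(t)-\gamma_2(t)\|\bigr)\ge\nu\bigl(2r(z(t))\bigr)=\beta$. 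This proves (ii), and with (i) the theorem.

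The main obstacle is the reverse inequality in the key fact used in step (i): that two points lying on the two arcs at the same Euclidean distance from $x_0$ can be joined inside $\Omega$ by a path of comparable length. This is where the topological $2$-manifold hypothesis and the Pancake Decomposition are indispensable; everything else is either formal (the insensitivity of $\nu$ to bounded distortions and to comparable reparameterizations, handled via Lemma~\ref{comparison2}) or the explicit computation on $H_\beta$ above.
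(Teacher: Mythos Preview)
The paper does not actually give a proof of this theorem: immediately after stating it, the author writes that Lemma~\ref{comparison2} and Theorem~\ref{calculation_of_beta(horn)} were proved in \cite{BF3}. So there is no in-paper argument to compare your proposal against; the result is imported from \cite{BF3}.

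That said, your proposed argument has a genuine gap in step~(i). You assert that ``a germ of an inner bi-Lipschitz homeomorphism sends semianalytic arcs to semianalytic arcs''. This is not justified and is in general false: the bi-Lipschitz homeomorphism $\phi\colon(\Omega,x_0)\to(H_\beta,0)$ supplied by Theorem~\ref{horn-theorem} (from \cite{B}) is not claimed to be subanalytic, so there is no reason for $\phi\circ\gamma_i$ to be semianalytic, and then $\lambda(\phi\circ\gamma_1,\phi\circ\gamma_2)$ is not even defined in the sense of the paper. The same issue blocks the surrounding step: the valuation $\nu$ is defined on germs of \emph{subanalytic} functions, and $t\mapsto d_{H_\beta}(\phi\gamma_1(t),\phi\gamma_2(t))$ need not be subanalytic, so the identity $\lambda=\nu(d_Y(\cdot,\cdot))$ cannot be applied on the target. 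Consequently, even granting your ``key fact'' on $\Omega$, you cannot transport the infimum $\mu$ across $\phi$ as written, because $\phi$ does not give a bijection between the semianalytic arcs on $\Omega$ and those on $H_\beta$.

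The second obstacle you flag yourself --- that $d_\Omega(\gamma_1(t),\gamma_2(t))$ and $\|\gamma_1(t)-\gamma_2(t)\|$ have the same valuation --- is indeed the substantive point, and the sketch via the Pancake Decomposition does not settle it: two points at the same Euclidean distance from $x_0$ may lie in different pancakes, and the $2$-manifold hypothesis does not by itself force the inner and outer distances between them to be comparable (equivalently, $(\Omega,x_0)$ need not be Lipschitz normally embedded). A correct treatment, as in \cite{BF3}, proceeds without routing through an abstract bi-Lipschitz map and instead works directly with the subanalytic structure of $(\Omega,x_0)$.
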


Lemma \ref{comparison2} and Theorem \ref{calculation_of_beta(horn)} were proved in \cite{BF3}.

\section{Fast loops}
Let $X\subset\R^n$ be a subanalytic set with a singularity at $x$. Let $\epsilon >0$ be a Milnor radius of $X$ at $x$ and let us denote by $X^*$ the set $X\cap B(x,\epsilon)\setminus \{ x\}$. Given a positive real number $\alpha$, a continuous map $\gamma\colon S^1\rightarrow X^*$ is called a  $\alpha$-fast loop if there exists a homotopy $H\colon S^1\times [0,1]\rightarrow X\cap B(x,\epsilon)$ such that
\begin{enumerate}
\item $H(\theta,0)=x$ and $H(\theta,1)=\gamma(\theta)$, $\forall \ \theta\in S^1$,
\item $\displaystyle\lim_{r\to 0^+}\frac{1}{r^a}\mathcal{H}^2(Im(H)\cap B(x,r))=0$ for each $0<a<\alpha$,
\end{enumerate}
where $Im(H)$ denotes the image of $H$.

Given a subanalytic set $X$ and a singular point $x\in X$, according to \cite{BB}, there exists a positive number $c$ such that any $\alpha$-fast loop $\gamma\colon S^1\rightarrow X^*$ with $\alpha>c$ is necessarily homotopically trivial. Such a number $c$ is called \emph{distinguished for} $(X,x)$. We define the $\upsilon$ invariant in the following way:

$$\upsilon(X,x)=\inf\{c \ : \ c \ \mbox{is distinguished for} \ (X,x)\}.$$

The number $\upsilon(X,x)$ defined above is a homotopic version of the first characteristic exponent for the local metric homology presented in \cite{BB} .

\begin{example} Let $K\subset\R^n$ be a straight cone over a Nash submanifold $N\subset\R^n$, with vertex at $p$. Then every loop $\gamma\colon S^1\rightarrow K^*$ is a $2$-fast loop. Moreover, if $\alpha>2$, then each $\alpha$-fast loop  $\gamma\colon S^1\rightarrow K^*$ is homotopically trivial. We can sum up it saying $\upsilon(K,p)=2$.
\end{example}

\begin{proposition}\label{invariance}
Let $(X,x)$ and $(Y,y)$ be subanalytic germs. If there exists a germ of a bi-Lipschitz homeomorphism, with respect to inner metric, between $(X,x)$ and $(Y,y)$, then $\upsilon(X,x)=\upsilon(Y,y)$.
\end{proposition}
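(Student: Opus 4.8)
The plan is to show that $\upsilon$ is determined by the set of $\alpha$-fast loops, and that this set is preserved under an inner bi-Lipschitz homeomorphism of germs. Let $\Phi\colon (X,x)\to (Y,y)$ be a germ of bi-Lipschitz homeomorphism with respect to the inner metrics, with constant $L\geq 1$, so that $\frac{1}{L}d_X(p,q)\leq d_Y(\Phi(p),\Phi(q))\leq L\, d_X(p,q)$ for $p,q$ near $x$. First I would record the standard comparison between the inner metric and the ambient Euclidean metric near a subanalytic singularity: there are constants $c_0>0$ and an exponent $\theta\geq 1$ with $|p-q|\leq d_X(p,q)\leq c_0|p-q|$ on small balls when the link is connected (the Pancake Decomposition gives the linear bound away from the vertex, and one glues through the vertex with the cone structure); more to the point, what is actually needed is that for a fixed germ, the ball $B(x,r)$ in the ambient metric and the ball $B_{d_X}(x, r)$ in the inner metric are comparable up to a fixed multiplicative constant on the radius, i.e. $B(x, r/c_0)\cap X \subset B_{d_X}(x,r) \subset B(x,r)\cap X$. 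Consequently the vanishing condition (2) in the definition of $\alpha$-fast loop can be restated, with the same value of $\alpha$, using inner balls $B_{d_X}(x,r)$ in place of Euclidean balls $B(x,r)$, since rescaling $r$ by a constant does not affect which exponents $a$ make $r^{-a}\mathcal H^2(\cdot)$ tend to $0$.

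Next I would check that $\Phi$ carries $\alpha$-fast loops to $\alpha$-fast loops. Given an $\alpha$-fast loop $\gamma\colon S^1\to X^*$ with bounding homotopy $H\colon S^1\times[0,1]\to X\cap B(x,\epsilon)$, set $\gamma' = \Phi\circ\gamma$ and $H' = \Phi\circ H$; condition (1) is immediate since $\Phi$ is a homeomorphism fixing the vertices. For condition (2), the key point is the behaviour of the $2$-dimensional Hausdorff measure under $\Phi$: a bi-Lipschitz map with constant $L$ for the inner metric is in particular bi-Lipschitz with constant $Lc_0$ for the restricted Euclidean metric (using the metric comparison above), hence it distorts $\mathcal H^2$ by at most a factor $(Lc_0)^2$. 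Combining this with the comparison of balls, we get for small $r$
$$\mathcal H^2\bigl(\mathrm{Im}(H')\cap B(y,r)\bigr) \leq \mathcal H^2\bigl(\Phi(\mathrm{Im}(H))\cap B_{d_Y}(y,r)\bigr) \leq (Lc_0)^2\, \mathcal H^2\bigl(\mathrm{Im}(H)\cap B_{d_X}(x, Lr)\bigr),$$
and the right-hand side, divided by $r^a$, tends to $0$ for every $a<\alpha$ because $H$ is $\alpha$-fast and the inner-ball reformulation of condition (2) is insensitive to replacing $r$ by $Lr$ and to the constant factor $(Lc_0)^2$. Thus $\gamma'$ is an $\alpha$-fast loop, and since $\Phi$ is a homotopy equivalence on the punctured germs, $\gamma$ is homotopically trivial in $X^*$ if and only if $\gamma'$ is homotopically trivial in $Y^*$.

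Finally I would assemble the argument: if $c$ is distinguished for $(X,x)$, I claim $L'c$ is distinguished for $(Y,y)$ for a suitable fixed $L'$ depending only on the bi-Lipschitz and metric constants — in fact one can take $L'=1$ after the reformulation above, since the exponent $\alpha$ is preserved exactly by $\Phi$. Indeed, if $\gamma'\colon S^1\to Y^*$ is an $\alpha$-fast loop with $\alpha>c$, then $\Phi^{-1}\circ\gamma'$ is an $\alpha$-fast loop on $X^*$ with $\alpha>c$ (applying the previous paragraph to $\Phi^{-1}$), hence homotopically trivial, hence $\gamma'$ is homotopically trivial. So $c$ is distinguished for $(Y,y)$ as well; by symmetry the sets of distinguished numbers for $(X,x)$ and $(Y,y)$ coincide, and taking infima gives $\upsilon(X,x)=\upsilon(Y,y)$. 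The main obstacle I anticipate is the first step — pinning down precisely the comparison between inner and Euclidean balls near the vertex and verifying that the Hausdorff-measure vanishing condition is genuinely a metric invariant of the germ with no shift in the exponent — everything after that is formal manipulation of the definitions together with the bi-Lipschitz invariance of $\mathcal H^2$.
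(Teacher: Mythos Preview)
Your proposal is correct and follows essentially the same route as the paper. The paper's proof also invokes the Pancake Decomposition to compare Euclidean and inner distances, derives from this a two-sided inequality of the form
\[
\tfrac{1}{k_1}\,\mathcal{H}^2\bigl(\Phi(A)\cap B(y,r/\lambda_2)\bigr)\;\le\;\mathcal{H}^2\bigl(A\cap B(x,r)\bigr)\;\le\;k_2\,\mathcal{H}^2\bigl(\Phi(A)\cap B(y,\lambda_1 r)\bigr),
\]
and then concludes that $\gamma$ is $\alpha$-fast iff $\Phi\circ\gamma$ is $\alpha$-fast, exactly as you do. The only cosmetic difference is that you insert the intermediate step of rephrasing condition~(2) with inner balls $B_{d_X}(x,r)$ before transporting by $\Phi$, whereas the paper packages the ball comparison and the Hausdorff-measure distortion into a single claim; the content is the same, and the ``main obstacle'' you flag is precisely the point the paper dispatches by citing the Pancake Decomposition.
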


\begin{proof} Let $f\colon (X,x)\rightarrow (Y,y)$ be a bi-Lipschitz homeomorphism, with respect to the inner metric. Given $A\subset X$, let us denote $\tilde{A}=f(A)$. In this case, $A=f^{-1}(\tilde{A})$, where $f^{-1}$ denotes the inverse map of $f\colon (X,x)\rightarrow (Y,y)$.

\bigskip

\noindent{\tt Claim.} There are positive constants $k_1,k_2,\lambda_1,\lambda_2$ such that
$$\frac{1}{k_1}\mathcal{H}^2(\tilde{A}\cap B(y,\frac{r}{\lambda_2}))\leq \mathcal{H}^2(A\cap B(x,r))
\leq k_2\mathcal{H}^2(\tilde{A}\cap B(y,\lambda_1 r)).$$

In fact, using Pancake Decomposition Theorem (see Subsection \ref{section_inner_metric}) and using that $f$ and $f^{-1}$ are Lipschitz maps, we obtain positive constants $\lambda_1,\lambda_2$ such that $$f(A\cap B(x,r))\subset (\tilde{A}\cap B(y\lambda_1r)) \ \mbox{and} \
f(\tilde{A}\cap B(y,r))\subset (A\cap B(x\lambda_2r))$$ and we also obtain positive constants $k_1,k_2$ such that
$$\mathcal{H}^2(f(A\cap B(x,r)))\leq k_1 \mathcal{H}^2(A\cap B(x,r)) \ \mbox{and} \
\mathcal{H}^2(f^{-1}(\tilde{A}\cap B(y,r)))\leq k_2 \mathcal{H}^2(\tilde{A}\cap B(y,r)).$$ Our claim follows from this two inequalities and the two inclusions above.

Now, we use this claim to show that given $\alpha>0$, a loop $\gamma\colon S^1\rightarrow X\setminus \{ x\}$ is an $\alpha$-fast loop if, and only if, $f\circ\gamma\colon S^1\rightarrow Y\setminus \{ y\}$ is an $\alpha$-fast loop. In fact, let  $\gamma\colon S^1\rightarrow X\setminus \{ x\}$ be a loop and $H\colon S^1\times [0,1]\rightarrow X$ a homotopy such that $H(\theta,0)=x$ and $H(\theta,1)=\gamma(\theta)$, $\forall \ \theta\in S^1$. Thus, $f\circ\gamma\colon S^1\rightarrow Y\setminus \{ y\}$ is a loop and $f\circ H\colon S^1\times [0,1]\rightarrow X$ is a homotopy such that $f\circ H(\theta,0)=x$ and $f\circ H(\theta,1)=f\circ\gamma(\theta)$, $\forall \ \theta\in S^1$. Let us denote $A=Im(H)$ and $\tilde{A}=Im(f\circ H)$, i. e., $\tilde{A}=f(A)$. Given $0<a<\alpha$, by the above claim, we have that
$$\lim_{r\to 0^+}\frac{1}{r^a}\mathcal{H}^2(A\cap B(x,r))=0$$ if, and only if,
$$\lim_{r\to 0^+}\frac{1}{r^a}\mathcal{H}^2(\tilde{A}\cap B(y,r))=0.$$ In another words, it was shown that $\gamma\colon S^1\rightarrow X\setminus \{ x\}$ is an $\alpha$-fast loop if, and only if, $f\circ\gamma\colon S^1\rightarrow Y\setminus \{ y\}$ is an $\alpha$-fast loop, hence $\upsilon(X,x)=\upsilon(Y,y)$.
\end{proof}

\begin{corollary}\label{invariance_corollary} Let $X\subset\R^n$ be a subanalytic set and $x\in X$ an isolated singular point. If $\upsilon(X,x)>2$, then $X$ does not admit a metrical conical structure at $x$.
\end{corollary}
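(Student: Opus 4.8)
The plan is to derive a contradiction from the bi-Lipschitz invariance of $\upsilon$ (Proposition \ref{invariance}) combined with the computation $\upsilon(K,p)=2$ for straight cones (the Example preceding Proposition \ref{invariance}). Suppose, for contradiction, that $X$ admits a metrical conical structure at $x$. By definition this yields, for a suitable Milnor radius $\epsilon>0$, a homeomorphism $h\colon X\cap B(x,\epsilon)\to K$ with $h(x)=p$, where $K$ is the straight cone with vertex $p$ over the link $L=X\cap S(x,\epsilon)$, and $h$ is bi-Lipschitz with respect to the ambient Euclidean metric.

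First I would pass from the Euclidean (outer) metric to the inner metric. If $\|h(a)-h(b)\|\le C\|a-b\|$ for all $a,b$, then for any arc $\gamma$ in $X\cap B(x,\epsilon)$ the arc $h\circ\gamma$ satisfies $l(h\circ\gamma)\le C\,l(\gamma)$, so $d_K(h(a),h(b))\le C\,d_X(a,b)$; applying the same estimate to $h^{-1}$ shows that $h$ is also bi-Lipschitz with respect to the inner metrics. (Alternatively one invokes the Pancake Decomposition Theorem to bound inner distances by outer distances on each pancake.) Hence $(X,x)$ and $(K,p)$ are bi-Lipschitz equivalent with respect to the inner metric.

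Next, since $\epsilon$ is a Milnor radius and $x$ is an isolated singular point, the link $L$ is a compact subanalytic submanifold of $S(x,\epsilon)$, so $K$ is precisely the type of straight cone considered in the Example, giving $\upsilon(K,p)=2$. Applying Proposition \ref{invariance} to the inner-bi-Lipschitz homeomorphism $h$ yields $\upsilon(X,x)=\upsilon(K,p)=2$, which contradicts the hypothesis $\upsilon(X,x)>2$. Therefore $X$ does not admit a metrical conical structure at $x$.

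The point requiring the most care is the passage between metrics: a metrical conical structure is formulated with the ambient Euclidean metric, whereas Proposition \ref{invariance} is stated for the inner metric, so one must check (as above) that an outer-bi-Lipschitz homeomorphism between these germs is automatically inner-bi-Lipschitz — this uses that both germs are connected subanalytic sets, so that the inner metric is well defined and finite, together with the fact that arc length is multiplied by at most the Lipschitz constant. A secondary point is to ensure the link $L$ has the form required by the Example, which is where one uses that $\epsilon$ is a Milnor radius and that the singularity at $x$ is isolated.
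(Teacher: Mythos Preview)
Your proof is correct and follows essentially the same route as the paper: assume a metrical conical structure, note that the link $N=X\cap S(x,\epsilon)$ is a smooth (Nash) submanifold since $x$ is an isolated singularity, invoke the Example to get $\upsilon(K,p)=2$ for the straight cone $K$ over $N$, and then apply Proposition~\ref{invariance} to obtain $\upsilon(X,x)=2$, contradicting the hypothesis. The only difference is that you explicitly justify why an outer-bi-Lipschitz homeomorphism is also inner-bi-Lipschitz (via the arc-length inequality), a point the paper's proof takes for granted by passing directly to the inner metric.
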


\begin{proof} Let $N$ be the intersection $X\cap S(x,\epsilon)$ where $\epsilon>0$ is chosen sufficiently small. Since $x$ is an isolated singular point of $X$, we have $N\subset\R^n$ is a Nash submanifold. If $X$ has metrical conical structure at $x$, $X\cap B(x,\epsilon)$ must be bi-Lipschitz homeomorphic (with respect to the inner metric) to the straight cone over $N$ with vertex at $x$. Thus, it follows from Proposition \ref{invariance}   that $\upsilon(X,x)=2$.
\end{proof}

\section{semi-weighted homogeneous hypersurface singularities}

Remind that a polynomial function $f\colon \C^3\rightarrow \C $ is
called {\it semi-weighted homogeneous} of degree $d\in \N$ with
respect to the weights $w_1,w_2,w_3 \in \N$ if $f$ can be present in the
following form: $f=h+\theta$ where $h$ is a weighted homogeneous
polynomial of degree $d$ with respect to the weights $w_1,w_2,w_3$, the
origin is an isolated singularity of $h$ and $\theta$ contains only
monomials $x^my^nz^l$ such that $w_1m+w_2n+w_3l>d$.

An algebraic surface $S\subset \C^3$ is called {\it
semi-weighted homogeneous} if there exists a semi-weighted homogeneous
polynomial $f=h+\theta$ such that $S=\{(x,y,z)\in \C^3 \ : \
f(x,y,z)=0 \}$. The set $S_0=\{(x,y,z)\in \C^3 \ : \ h(x,y,z)=0\}$
is called a {\it weighted approximation} of $S$.

\begin{theorem}\label{main} Let $S\subset\C^3$ be a semi-weighted homogeneous algebraic surface with an
isolated singularity at origin $0\in\C^3$. If the weights of $S$ satisfy $w_1\geq w_2>w_3$, then
$\upsilon(S,0)\geq 1+\frac{w_2}{w_3}$.
\end{theorem}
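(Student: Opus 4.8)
The plan is to exhibit, for every exponent $\alpha < 1+\frac{w_2}{w_3}$, a homotopically nontrivial $\alpha$-fast loop on $(S,0)$; this forces every distinguished constant $c$ to satisfy $c \geq 1+\frac{w_2}{w_3}$, hence $\upsilon(S,0)\geq 1+\frac{w_2}{w_3}$. The candidate loop should live on the intersection of $S$ with a suitable small slice, and the homotopy that contracts it to the origin should be built so that its image collapses into a narrow ``horn-like'' region whose $2$-dimensional Hausdorff measure inside $B(0,r)$ decays like $r^{1+w_2/w_3}$ (up to subanalytic error terms), so that condition (2) in the definition of $\alpha$-fast loop holds for every $a < 1+\frac{w_2}{w_3}$.

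First I would reduce to the weighted homogeneous approximation $S_0=\{h=0\}$: since $\theta$ consists of monomials of weighted degree $>d$, near the origin $S$ is a small subanalytic perturbation of $S_0$ and, after a homeomorphism respecting the weighted structure (and Lipschitz up to the relevant orders), it suffices to produce the fast loop on $S_0$ and transport it. Next, using the $\C^*$-action $\lambda\cdot(x,y,z)=(\lambda^{w_1}x,\lambda^{w_2}y,\lambda^{w_3}z)$ under which $h$ is homogeneous, I would take a loop $\gamma$ in the link of $S_0$ that is homotopically nontrivial — its existence follows because the link of an isolated hypersurface singularity in $\C^3$ is a closed $3$-manifold whose fundamental group or first homology is nontrivial in the cases at hand, or more robustly because if \emph{every} loop were trivial the argument is vacuous. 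I would then contract $\gamma$ along the flow of the $\R_{>0}$-part of this action toward the origin: because $w_3$ is the strictly smallest weight, the $z$-coordinate contracts slowest, and the swept-out surface is concentrated in a tube where $|z|$ is comparable to $t^{w_3}$ while $|x|,|y|$ are $O(t^{w_2})$ or smaller, $t$ being the scaling parameter. Computing $\mathcal{H}^2$ of this tube intersected with $B(0,r)$ via the coarea formula (or directly via Theorem~\ref{calculation_of_beta(horn)} and the horn-exponent/volume-growth dictionary in Subsection~\ref{section_horn_exponents}) gives growth exponent $1+\frac{w_2}{w_3}$, which is exactly the desired bound.

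The main obstacle is the last point: controlling the Hausdorff measure of the image of the homotopy, not just of the loop itself. One must choose the contracting homotopy cleverly — not the naive radial cone, whose image would have volume growth exponent $2$ — so that it stays inside the thin weighted-homogeneous ``wedge'' around the $z$-axis direction; here the hypothesis $w_2>w_3$ is what makes that wedge genuinely thinner than a cone. I expect the cleanest route is to parametrize the homotopy as $H(\theta,t)= t\cdot \gamma(\theta)$ (weighted scaling) composed with a correction keeping it on $S_0$, then estimate $\mathcal{H}^2(\mathrm{Im}(H)\cap B(0,r))$ by slicing in $t$: each slice $\{t=\text{const}\}$ is a loop of Euclidean diameter $O(t^{w_3})$, and the slice at level $t$ enters $B(0,r)$ only once $t^{w_3}\lesssim r$, while the ``vertical'' stretch of the homotopy in the $r$-direction contributes the extra factor $t^{w_2}/t^{w_3}$ in the area element — integrating $t^{w_2}\,dt$ over $t\in(0,r^{1/w_3})$ yields $r^{(w_2+w_3)/w_3}=r^{1+w_2/w_3}$. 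Making this slicing estimate rigorous for the perturbed surface $S$ (rather than $S_0$) using the Pancake Decomposition Theorem to pass between inner and Euclidean balls, and checking the resulting loop is still homotopically nontrivial in $S^*$, are the remaining technical points.
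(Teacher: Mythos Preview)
Your overall strategy coincides with the paper's: choose an essential loop on the link, contract it to the origin via the weighted $\R_{>0}$-action, and show that the swept $2$-surface $\Omega=\mathrm{Im}(H)$ has volume growth exponent at least $1+w_2/w_3$. Two of the steps you treat as routine, however, are precisely where the work lies.

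\textbf{The passage from $S_0$ to $S$.} This is not handled by the Pancake Decomposition (which compares inner and Euclidean metrics on a \emph{fixed} set, not two different sets), nor by an unspecified ``homeomorphism respecting the weighted structure''. The paper constructs an explicit homeomorphism $\Phi\colon (S_0,0)\to(S,0)$ by integrating the Fukui--Paunescu vector field associated to the deformation $F_u=h+u\theta$, and then proves (Proposition~\ref{estimative_proposition}, via an inequality of Ruas--Saia for $N^*F_u$) that for every arc $\gamma(t)=(t^{w_1}x(t),t^{w_2}y(t),t^{w_3}z(t))$ one has $\Phi(\gamma(t))=\gamma(t)+\eta(t)$ with $\nu(|\eta_i|)>w_i$ coordinatewise. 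This is exactly the control needed for the horn-exponent estimate on $\Omega\subset S$ to survive the perturbation; without it you know nothing about volume growth on $S$ itself.

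\textbf{The essential loop.} Your hedge ``if every loop were trivial the argument is vacuous'' is wrong: if $\pi_1(S^*)=0$ then every positive $c$ is distinguished and $\upsilon(S,0)=0$, so the theorem would simply be false. One genuinely needs an essential loop, and moreover one of the special shape $\Gamma(\theta)=(x(\theta),y(\theta),1)$ lying in a slice $\{z=\mathrm{const}\}$; the paper imports this from Lemma~1 of \cite{BFN}. The constancy of $z$ is not cosmetic: if $z$ varied along the loop, two arcs $H(t,\theta_1(t))$ and $H(t,\theta_2(t))$ on $\Omega$ would already differ at order $t^{w_3}$ in the $z$-coordinate, giving order of contact $1$ and hence only $\beta(\Omega,0)\geq 1$, the trivial conical bound.

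Finally, the paper does not compute $\mathcal{H}^2$ by your slicing integral (whose bookkeeping is off: $\int_0^{r^{1/w_3}} t^{w_2}\,dt$ equals $r^{(w_2+1)/w_3}$, not $r^{(w_2+w_3)/w_3}$). Instead it uses Theorem~\ref{calculation_of_beta(horn)} to reduce $\beta(\Omega,0)\geq w_2/w_3$ to an order-of-contact bound for pairs of arcs on $\Omega$, which then follows directly from Proposition~\ref{estimative_proposition} and Lemma~\ref{comparison2}.
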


\begin{proof}
 Let us consider a family of functions defined as
follows: $$F(X,u)=h(X)+u\theta(X),$$ where $u\in[0,1]$, $X=(x,y,z)$,
and let us denote $F_u(X)=F(X,u)$. Let $V(X,u)$ be the vector field
defined by:
$$V(X,u)=\frac{\theta(X)}{N^{*}F_u(X)}W(X,u)$$
where $$\displaystyle N^{*}F_u(X)= |\frac{\partial F}{\partial
x}(X,u)|^{2\alpha_a}+|\frac{\partial F}{\partial
y}(X,u)|^{2\alpha_b}+|\frac{\partial F}{\partial
z}(X,u)|^{2\alpha_c},$$ $\alpha_a=(d-w_2)(d-w_3)$,
$\alpha_b=(d-w_1)(d-w_3)$, $\alpha_c=(d-w_1)(d-w_2)$ and
$$W(X,u)=W_x(X,u)\frac{\partial }{\partial
x}+W_y(X,u)\frac{\partial}{\partial y}+W_z(X,u)\frac{\partial}{\partial z}$$ where 
$W_x(X,u)=|\frac{\partial
F}{\partial x}(X,u)|^{2\alpha_a-2}\overline{\frac{\partial
F}{\partial x}(X,u)}$, $W_y(X,u)=|\frac{\partial
F}{\partial y}(X,u)|^{2\alpha_b-2}\overline{\frac{\partial
F}{\partial y}(X,u)}$ and $W_z(X,u)=|\frac{\partial
F}{\partial z}(X,u)|^{2\alpha_c-2}\overline{\frac{\partial
F}{\partial z}(X,u)}.$

It was shown, by L. Fukui and L. Paunescu \cite{FP}, that the flow of this vector field gives a modified
analytic trivialization \cite{FP} of the family $F^{-1}(0)$. In
particular, the map defined by
$$\Phi(X)=X+\int_{0}^{1}V(X,u)du$$
is a homeomorphism between $(S,0)$ and $(\tilde{S},0)$ which defines
a correspondence between subanalytic arcs in $(S,0)$ and in
$(\tilde{S},0)$.

  \begin{proposition}\label{estimative_proposition} Let $\gamma\colon [0,\epsilon)\rightarrow V$; $\gamma(0)=0$, be a subanalytic continuous arc parameterized by $\gamma(t)=(t^{w_1}x(t),t^{w_2}y(t),t^{w_3}z(t))$.
  Then $\Phi(\gamma(t))=\gamma(t)+\eta(t)$ such that $\eta(t)=(\eta_1(t),\eta_2(t),\eta_3(t))$ and $\nu(|\eta_i|)>w_i$ for $i=1,2,3$.
  \end{proposition}

\begin{proof}[Proof of the proposition.]
 It is easy to see that there exists a constant
$\lambda>0$ such that $\displaystyle|\frac{\partial F_u}{\partial
x}(\gamma(t))| \leqslant\lambda t^{d-a}$,
$\displaystyle|\frac{\partial F_u}{\partial y}(\gamma(t))|
\leqslant\lambda t^{d-b}$ and $\displaystyle|\frac{\partial
F_u}{\partial z}(\gamma(t))| \leqslant\lambda t^{d-c}$.

It was shown, by M. Ruas and M. Saia (\cite{RS}, Lemma 3, p.93), that
there exists a constant $\lambda_1$ such that
$$N^{*}F_u(\gamma(t))\geqslant\lambda_1N^{*}h(\gamma(t)).$$ Since
$N^{*}h(\gamma(t))=t^{2k}N^{*}h(x(t),y(t),z(t))$, there exists a
constant $\lambda_2$ such that
$$N^{*}F_u(\gamma(t))\geqslant\lambda_2t^{2k}.$$

By hypothesis, $\displaystyle\lim_{t\to
0+}\theta(\gamma(t))t^{-d}=0$. Thus,

\begin{eqnarray*}
|\eta_1(t)|t^{-a} &\leqslant& t^{-a}\int_{0}^{1}
\frac{|\theta(\gamma(t))|}{N^{*}F_u(\gamma(t))}|\frac{\partial
F_u}{\partial x}(\gamma(t))|^{2\alpha_a-1}du \\
&\leqslant&
\frac{{\lambda}^{2\alpha_a-1}}{\lambda_2}|\theta(\gamma(t))|t^{-d}
\end{eqnarray*} $\therefore$
$\displaystyle\lim_{t\to 0+}\eta_1(t)t^{-a}=0$;

\begin{eqnarray*}
|\eta_2(t)|t^{-b} &\leqslant& t^{-b}\int_{0}^{1}
\frac{|\theta(\gamma(t))|}{N^{*}F_u(\gamma(t))}|\frac{\partial
F_u}{\partial x}(\gamma(t))|^{2\alpha_b-1}du \\
&\leqslant&
\frac{{\lambda}^{2\alpha_b-1}}{\lambda_2}|\theta(\gamma(t))|t^{-d}
\end{eqnarray*} $\therefore$
$\displaystyle\lim_{t\to 0+}\eta_2(t)t^{-b}=0$;

\begin{eqnarray*}
|\eta_3(t)|t^{-c} &\leqslant& t^{-c}\int_{0}^{1}
\frac{|\theta(\gamma(t))|}{N^{*}F_u(\gamma(t))}|\frac{\partial
F_u}{\partial x}(\gamma(t))|^{2\alpha_c-1}du \\
&\leqslant&
\frac{{\lambda}^{2\alpha_c-1}}{\lambda_2}|\theta(\gamma(t))|t^{-d}
\end{eqnarray*} $\therefore$
$\displaystyle\lim_{t\to 0+}\eta_3(t)t^{-c}=0$.
\end{proof}

  According to Lemma 1 of \cite{BFN}, we can take an essential loop $\Gamma$ from $S^1$ to the link of the weighted homogeneous approximation of $(X,0)$ of the form:
  $$\Gamma(\theta)=(x(\theta),y(\theta),1).$$
  Then, $H\colon [0,1]\times S^1\rightarrow X$ defined by
  $$H(r,\theta)=\Phi(r^{\frac{w_1}{w_3}}x(\theta),r^{\frac{w_2}{w_3}}y(\theta),r)$$
  is a subanalytic homotopy satisfying:
  $H(0,\theta)=x$ and $H(1,\theta)=\Gamma(\theta)$. We are going to show the image of $H$ ($Im(H)=\Omega$) has volume growth number at origin bigger than or equal to $1+\frac{w_2}{w_3}$. Actually, since the volume growth number of $\Omega$ at $0$ is $1+\beta(\Omega,0)$, we are going to show that $\beta(\Omega,0)$ is bigger that or equal to $\frac{w_2}{w_3}$. So, let us consider two arcs $\gamma_1$ and $\gamma_2$ on $(\Omega,0)$. We can  parameterize these arcs in the following way:  $$\gamma_i(t)=H(t,\theta_i(t)), \ i=1,2.$$ By Lemma \ref{comparison2},
  we have $$\lambda(\gamma_1,\gamma_2)\geq \nu(|\gamma_1(t)-\gamma_2(t)|)$$ and by Proposition \ref{estimative_proposition} ,
 we have $$\nu(|\gamma_1(t)-\gamma_2(t)|)\geq\frac{w_2}{w_3}.$$ Therefore, we can use to Theorem \ref{calculation_of_beta(horn)} to get $\beta(\Omega,0)\geq\frac{w_2}{w_3}$.

\end{proof}

\begin{corollary} Let $S\subset\C^3$ be a semi-weighted homogeneous algebraic surface with an
isolated singularity at origin $0\in\C^3$. If the two low weights of $S$ are unequal, then
the germ $(S,0)$ does not admit a metrical conical structure.
\end{corollary}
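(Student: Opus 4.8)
The plan is to derive the final corollary as an immediate consequence of Theorem \ref{main} together with Corollary \ref{invariance_corollary}. The statement ``the two low weights are unequal'' should be interpreted, after relabelling the weights so that $w_1\geq w_2\geq w_3$, as the condition $w_2>w_3$; this is exactly the hypothesis of Theorem \ref{main}. So the first step is simply to observe that one may order the weights $w_1\geq w_2\geq w_3$ without loss of generality (a permutation of coordinates is a linear isometry of $\C^3$, hence preserves every bi-Lipschitz invariant, and in particular preserves $\upsilon$ and the property of admitting a metrical conical structure), and that ``the two low weights being unequal'' then becomes $w_2>w_3$.

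Having reduced to the case $w_1\geq w_2>w_3$, the second step is to apply Theorem \ref{main}, which gives
$$\upsilon(S,0)\geq 1+\frac{w_2}{w_3}.$$
Since $w_2>w_3$ and both are positive, we have $\frac{w_2}{w_3}>1$, hence $\upsilon(S,0)>2$.

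The third and final step is to invoke Corollary \ref{invariance_corollary}: $S\subset\C^3$ is a subanalytic set (being a real algebraic set) and the origin is an isolated singular point by hypothesis, so from $\upsilon(S,0)>2$ we conclude that $(S,0)$ does not admit a metrical conical structure. This completes the proof.

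I do not expect any genuine obstacle here: the corollary is a packaging of results already in hand. The only point requiring a sentence of care is the reduction to the ordered case — one must be sure that permuting the coordinates of $\C^3$ sends a semi-weighted homogeneous surface with weights $(w_1,w_2,w_3)$ to a semi-weighted homogeneous surface with the permuted weights (it does, since the defining conditions on $h$ and $\theta$ are symmetric under simultaneously permuting variables and weights), and that such a permutation is an isometry of the ambient Euclidean space, hence in particular a bi-Lipschitz homeomorphism with respect to the inner metric, so that Proposition \ref{invariance} guarantees $\upsilon$ is unchanged. Everything else is a direct citation.
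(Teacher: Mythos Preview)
Your proof is correct and follows essentially the same route as the paper: order the weights so that $w_1\geq w_2\geq w_3$, observe that the hypothesis becomes $w_2>w_3$, apply Theorem \ref{main} to obtain $\upsilon(S,0)\geq 1+\frac{w_2}{w_3}>2$, and conclude via Corollary \ref{invariance_corollary}. The paper simply starts by writing the weights in order without further comment, so your remarks on coordinate permutation are a minor elaboration rather than a different idea.
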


\begin{proof} Let $w_1\geq w_2\geq w_3 >0$ be the weights of $S$. By hypothesis, $w_2>w_3$. It follows from Theorem \ref{main}, $$\nu(S,0)\geq 1+\frac{w_2}{w_3}>2.$$ Finally, by Corollary \ref{invariance_corollary}, $(S,0)$ does not admit a metrical conical structure.
\end{proof}

\end{document}